\providecommand{\U}[1]{\protect\rule{.1in}{.1in}}
\numberwithin{equation}{section}
\newtheorem{theorem}{Theorem}[section]
\newtheorem{lemma}[theorem]{Lemma}
\newtheorem{remark}[theorem]{Remark}
\def\<{\langle}
\def\>{\rangle}
\def\R{\mathbb{R}}
\def\T{\mathbb{T}}
\def\Z{\mathbb{Z}}
\begin{document}

\title{Uniqueness of weak solutions to the limit resonant equation of 3D rotating Navier-Stokes equations}

\author{Dejun Luo\footnote{Email: luodj@amss.ac.cn. Key Laboratory of RCSDS, Academy of Mathematics and Systems Science, Chinese Academy of Sciences, Beijing 100190, China and School of Mathematical Sciences, University of Chinese Academy of Sciences, Beijing 100049, China} }
\maketitle

\begin{abstract}
The limit resonant equation of the 3D rotating Navier-Stokes equations is obtained by taking large rotation limit. This equation has a nonlinear term with restricted interactions between Fourier modes, and thus it enjoys better regularity estimates than those of the classical 3D Navier-Stokes equations. Such estimates enable us to prove uniqueness of weak solutions to the limit resonant equation.
\end{abstract}

\textbf{Keywords:} 3D Navier-Stokes equations, Coriolis force, weak solution, Poincar\'e propagator, resonant operator

\textbf{MSC (2020):} 76D03, 35Q30

\section{Introduction}

Consider the 3D rotating Navier-Stokes equations (or the 3D Navier-Stokes equations with Coriolis force) on the torus $\T^3$:
  \begin{equation}\label{3D-RNSE}
  \left\{ \aligned
  & \partial_t U + U\cdot\nabla U + \Omega e_3\times U + \nabla p= \nu \Delta U , \\
  & \nabla\cdot U= 0, \quad  U(0,\cdot) = U_0,
  \endaligned \right.
  \end{equation}
where $U=(U_1, U_2, U_3)^\ast$ is the fluid velocity field, $p$ the scalar pressure field, $\nu>0$ the viscosity coefficient and $\Omega>0$ the angular velocity around the vertical axis $e_3=(0,0,1)^\ast$. Let $H^s:= H^s(\T^3, \R^3) \, (s\in \R)$ be the usual Sobolev space of divergence free vector fields on $\T^3$; we shall write $H=L^2$ for $H^0$. In the important work \cite{BMN99}, Babin, Mahalov and Nicolaenko have shown that, given $\alpha>1/2$ and divergence free vector field $U_0$ in a ball of $ H^\alpha$, if $\Omega$ is big enough, then the above system admits a global regular solution for all $t\ge 0$ and $\|U(t)\|_{H^\alpha}$ is bounded in $t$; cf. \cite[Theorem 1.1]{BMN99} and also \cite{BMN97, BMN01} for related studies.

We briefly recall the strategy of their proof and refer to the next section for precise definitions and notation. Let $\{E(-\Omega t) \}_{t\in \R}$ be the Poincar\'e propagator which is a group of unitary operators; introducing the van der Pol transformation
  $$U(t) = E(-\Omega t) u(t) ,$$
we obtain an equation in $u$ variables
  \begin{equation}\label{3D-RNSE-transformed}
  \left\{ \aligned
  & \partial_t u + B(\Omega t, u,u) =  \nu \Delta u, \\
  & B(\Omega t, u,u)= E(\Omega t)\, B(E(-\Omega t)\, u, E(-\Omega t)\, u).
  \endaligned \right.
  \end{equation}
The nonlinear term admits the decomposition
  $$B(\Omega t, u,u)= \tilde B( u,u) + B^{osc}(\Omega t, u,u),$$
where $\tilde B( u,u)$ is $\Omega t$-independent while the second term $B^{osc}(\Omega t, u,u)$ contains $\Omega t$-dependent terms. In the large rotation limit $\Omega\to \infty$, one can show that $B^{osc}(\Omega t, u,u)$ vanishes in a suitable sense and thus we arrive at the limit equation (called the limit resonant equation in \cite{BMN99} or the fast singular oscillating limit equation in \cite{BMN01}):
  \begin{equation}\label{limit-resonant-eq}
  \partial_t u + \tilde B( u,u) =  \nu \Delta u.
  \end{equation}
The nonlinear term $\tilde B( u,u)$ fulfills the usual properties of the nonlinearity of the classical 3D Navier-Stokes equations; moreover, due to limited interactions between Fourier modes, one can prove better regularity estimate on $\tilde B( u,u)$ (see \cite[Theorem 3.1]{BMN99}), which allows us to show global well-posedness of \eqref{limit-resonant-eq} for smooth initial data. Finally, for $\Omega$ large but finite, one can bootstrap the global regularity of solution to \eqref{3D-RNSE} from that of \eqref{limit-resonant-eq}.

In this paper, we are concerned with the well-posedness of \eqref{limit-resonant-eq} with $L^2$-initial data. The existence of weak solutions to \eqref{limit-resonant-eq} satisfying energy inequality follows from classical arguments; our purpose is to show that such solutions are also unique. We write $a\lesssim b$ if $a\le C b$ for some unimportant constant $C>0$.

\begin{theorem}\label{thm-key-estimate}
The following estimate holds:
  \begin{equation}\label{thm-key-estimate.1}
  \big|\big\<\tilde B(u,v), u\big\> \big| \lesssim \|u \|_{L^2} \|u\|_{H^1} \|v\|_{H^1} ,\quad \forall\, u ,\, v\in H^1.
  \end{equation}
As a result, Leray's weak solutions to \eqref{limit-resonant-eq} are unique.
\end{theorem}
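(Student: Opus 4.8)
The plan is to regard the trilinear estimate \eqref{thm-key-estimate.1} as the substantive point and to derive uniqueness from it by the classical two‑dimensional energy method of Lions and Prodi; the gain of \eqref{thm-key-estimate.1} over the scaling‑critical three‑dimensional bound $|\langle B(u,v),u\rangle|\lesssim\|u\|_{L^2}^{1/2}\|u\|_{H^1}^{3/2}\|v\|_{H^1}$ is exactly what makes that method applicable at $L^2$ regularity.

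To prove \eqref{thm-key-estimate.1} I would pass to Fourier variables. Expanding $u,v$ in the eigenbasis of the Poincar\'e operator $\mathbb{P}(e_3\times\cdot)$ and recalling that $\tilde B$ retains only the resonant interactions (triads $k=m+n$ whose Poincar\'e frequencies $k_3/|k|,\,m_3/|m|,\,n_3/|n|$ satisfy the corresponding additive relation), I would split $u=\bar u+\tilde u$, $v=\bar v+\tilde v$ into the part carried by Fourier modes with vanishing vertical wavenumber (the 2D--3C component) and the oscillatory remainder. The trilinear form $\langle\tilde B(u,v),u\rangle$ then breaks into finitely many pieces. The genuinely two‑dimensional piece $\langle\tilde B(\bar u,\bar v),\bar u\rangle$ is controlled by $\|\bar u\|_{L^2}\|\bar u\|_{H^1}\|\bar v\|_{H^1}$ through the Ladyzhenskaya inequality in dimension two. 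In every piece involving at least one oscillatory factor, the resonance condition pins down a rigid relation among the wavenumbers (e.g.\ equalities $|m|=|k|$, or vanishing of one vertical component); combining this with anisotropic Sobolev embeddings — exploiting that $\bar u,\bar v$ depend on the two horizontal variables only, and that one oscillatory factor may be kept in $L^2$ rather than $H^1$ — one recovers the half derivative that the three‑dimensional estimate lacks, and each piece is again $\lesssim\|u\|_{L^2}\|u\|_{H^1}\|v\|_{H^1}$. This is the mechanism of \cite[Theorem~3.1]{BMN99}, the task being to push it down to the endpoint regularity $u,v\in H^1$. Summing the pieces gives \eqref{thm-key-estimate.1}.

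For uniqueness, let $u_1,u_2$ be two Leray solutions of \eqref{limit-resonant-eq} on $[0,T]$ issuing from the same datum and put $w:=u_1-u_2\in L^\infty(0,T;L^2)\cap L^2(0,T;H^1)$. Subtracting the equations and using bilinearity, $w$ solves $\partial_t w+\tilde B(u_1,w)+\tilde B(w,u_2)=\nu\Delta w$. The crude bound $\|\tilde B(a,b)\|_{H^{-1}}\lesssim\|a\|_{L^4}\|b\|_{L^4}$ (as for the Navier--Stokes nonlinearity) places $\partial_t w$ in $L^{4/3}(0,T;H^{-1})$, so that, by the standard treatment of the energy method (alternatively via the smoothing of \eqref{limit-resonant-eq} that follows from \cite{BMN99}), $w\in C([0,T];L^2)$ and $\tfrac12\tfrac{d}{dt}\|w\|_{L^2}^2=\langle\partial_t w,w\rangle$ for a.e.\ $t$. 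Since $u_1$ is divergence free, $\langle\tilde B(u_1,w),w\rangle=0$ (the energy‑type skew‑symmetry inherited from the Navier--Stokes nonlinearity), whence
\[
  \tfrac12\tfrac{d}{dt}\|w\|_{L^2}^2+\nu\|\nabla w\|_{L^2}^2=-\big\<\tilde B(w,u_2),w\big\>.
\]
Applying \eqref{thm-key-estimate.1} with $u=w$, $v=u_2$, then $\|w\|_{H^1}\le\|w\|_{L^2}+\|\nabla w\|_{L^2}$ and Young's inequality, bounds the right‑hand side by $\tfrac{\nu}{2}\|\nabla w\|_{L^2}^2+g(t)\|w\|_{L^2}^2$ with $g:=C\big(\|u_2\|_{H^1}+\|u_2\|_{H^1}^2\big)$. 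Discarding the surviving dissipation, $\tfrac{d}{dt}\|w\|_{L^2}^2\le 2g(t)\|w\|_{L^2}^2$, and $g\in L^1(0,T)$ because $u_2\in L^\infty(0,T;L^2)\cap L^2(0,T;H^1)$. As $w(0)=0$, Gr\"onwall's lemma gives $w\equiv0$, i.e.\ $u_1=u_2$.

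The main obstacle is \eqref{thm-key-estimate.1} itself: one must turn the resonance restriction on the interacting Fourier triads into the missing half derivative uniformly over $u,v\in H^1$, with no spectral gap or extra regularity to spare, and the terms coupling the 2D--3C and oscillatory components, together with the purely oscillatory self‑interaction, are the delicate ones. By contrast the uniqueness step is the Lions--Prodi argument almost verbatim; the one new input is that \eqref{thm-key-estimate.1} renders the nonlinear contribution to the energy balance integrable in time, which is precisely why uniqueness can be obtained for merely $L^2$ data. A secondary, routine technical point is the justification of the energy identity for $w$ given only $\partial_t w\in L^{4/3}(0,T;H^{-1})$, handled in the classical way.
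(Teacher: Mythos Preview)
Your derivation of uniqueness from \eqref{thm-key-estimate.1} is essentially the paper's argument: the paper also subtracts, uses the skew--symmetry \eqref{B-tilde-property} to kill one of the two bilinear terms, applies \eqref{thm-key-estimate.1} to the survivor, and closes by Gr\"onwall. Two minor differences: the paper writes $\tilde B(u,u)-\tilde B(v,v)=\tilde B(u-v,u)+\tilde B(v,u-v)$ (so the estimate is applied with $v=u$ rather than $v=u_2$), and the paper obtains $\partial_t w\in L^2(0,T;H^{-1})$ directly from the consequence $\|\tilde B(u,u)\|_{H^{-1}}\lesssim\|u\|_{L^2}\|u\|_{H^1}$ of \eqref{thm-key-estimate.1}, which is cleaner than your $L^{4/3}$ route via $L^4$ bounds.

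For \eqref{thm-key-estimate.1} itself, however, your plan diverges from the paper and leaves the hard case unproved. The paper does \emph{not} split into 2D--3C and oscillatory parts and does not use anisotropic Sobolev embeddings. Instead it proves a restricted convolution lemma (Lemma~\ref{lem-restricted-convol}): if the indicator $\chi(k,m,n)$ of a symmetric set $\Gamma\subset(\Z^3_0)^3$ satisfies the dyadic counting bound
\[
\sup_{n}\sum_{k\in S_i,\,k+m+n=0}\chi(k,m,n)\,|k|^{-\alpha}\le C_0\,2^{i\beta},
\]
then $\sum_{k+m+n=0}|u_k|\,|m|\,|v_m|\,|u_n|\,\chi(k,m,n)\lesssim(\|u\|_{H^{\alpha/2}}\|u\|_{H^{\beta/2}}+\|u\|_{L^2}\|u\|_{H^{(\alpha+\beta)/2}})\|v\|_{H^1}$. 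The proof is a Littlewood--Paley decomposition of the convolution sum into six ordered pieces ($|n|\ge|k|\ge|m|$, etc.), each estimated by Cauchy--Schwarz against the dyadic count. The paper then verifies (Lemma~\ref{lem-dyadic-set}) that the resonant set $\Gamma$ satisfies the hypothesis with $\alpha=\beta=1$, by a case analysis on how many of $k_3,m_3,n_3$ vanish; the generic case $k_3m_3n_3\neq0$ is the lattice--point counting from \cite[p.~1151]{BMN99}. Plugging $\alpha=\beta=1$ and interpolating $\|u\|_{H^{1/2}}^2\le\|u\|_{L^2}\|u\|_{H^1}$ gives \eqref{thm-key-estimate.1}.

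Your decomposition recovers the half derivative for the 2D--3C piece and for the mixed pieces where a rigid constraint like $|m|=|k|$ appears, but the purely oscillatory triple interaction (all three vertical wavenumbers nonzero) is precisely where the resonance condition is a genuine quadric surface in the lattice rather than an equality of norms, and ``anisotropic Sobolev embeddings'' do not obviously extract the needed gain there; the paper's route through an explicit lattice--point count is what actually closes this case. As written, your sketch identifies the obstacle but does not resolve it.
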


The above estimate has the same form as that of the nonlinearity of 2D Navier-Stokes equations; it is the key ingredient for proving uniqueness of weak solutions to \eqref{limit-resonant-eq}. We shall prove \eqref{thm-key-estimate.1} by adapting the arguments in the proof of \cite[Theorem 3.1]{BMN99}. Unfortunately, in this case, we are unable to deduce similar assertions for the original 3D rotating Navier-Stokes equations \eqref{3D-RNSE}. We point out that, using the estimate of this paper, one can also derive uniqueness of weak solutions to \eqref{limit-resonant-eq} with suitable random noise.

Let us recall that, for 3D Navier-Stokes equations without Coriolis force (i.e. $\Omega=0$ in \eqref{3D-RNSE}), Leray proved in the fundamental work \cite{Leray33} global existence of weak solutions satisfying energy inequality, but their uniqueness remains open; on the other hand, for smooth initial data, strong solutions exist uniquely but their global regularity is unknown. Since then there have been numerous efforts but the overall solution theory remains roughly unchanged. As discussed at the beginning of \cite{BMN01}, the above rotating 3D Navier-Stokes equations \eqref{3D-RNSE} are equivalent to the 3D Navier-Stokes system without rotation, but with an initial data characterized by uniformly large vorticity; therefore, the results of Babin, Mahalov and Nicolaenko identify a class of initial data for which the Navier-Stokes system is globally well posed.

Though the uniqueness result in Theorem \ref{thm-key-estimate} is concerned with \eqref{limit-resonant-eq}, rather than the original 3D Navier-Stokes equations, it might be useful to recall various non-uniqueness results proved in recent years. Using convex integration method, non-unique weak solutions to 3D Euler equations were constructed in \cite{DLS09, DLS13, DLS14, BTDLIS, Isett}, which finally leads to the solution of Onsager's conjecture. This method was adapted by Buckmaster and Vicol \cite{BV19} to construct non-unique weak solutions to the 3D Navier-Stokes equations; see \cite{HZZ21, HZZ22, HLP23, Pap23} for extensions to the stochastic Euler/Navier-Stokes equations with various random perturbations. However, the weak solutions constructed from convex integration method do not fulfill the energy inequality of Leray solutions. In the recent work \cite{ABC22}, Albritton, Bru\'e and Colombo were able to prove non-uniqueness of weak solutions, in the Leray class, to 3D Navier-Stokes equations, but they had to construct at the same time an extra forcing term. Their method is based on adapting the constructions in \cite{Vis18a, Vis18b, ABCDGJK} which show the non-uniqueness of 2D Euler equations with $L^p$-initial vorticity for $p<\infty$. We finish this paragraph by mentioning a few results in the theory of regularization by noise, see e.g. \cite{GHV14, RZZ14, FL21, FHLN, Luo23} and the references therein.

This short paper is organized as follows. In Section 2 we introduce some notation and give more information on the limit resonant equation \eqref{limit-resonant-eq}. Then we prove in Section 3 the main result.

\section{Notation and main result}

We follow the presentation in \cite[Sections 2 and 3]{FM12}, see also \cite[Section 2]{BMN99}. The vector product $e_3\times U$ can be written as
  $$ e_3\times U = \begin{pmatrix}
  0 &\ -1 &\ 0 \\
  1 &\ 0 &\ 0 \\
  0 &\ 0 &\ 0
  \end{pmatrix} U . $$
Let $J$ be the skew-symmetric matrix. Consider the linear Poincar\'e problem, which is the linearized version of \eqref{3D-RNSE} with $\nu=0$:
  $$\partial_t \Phi + \Omega J\Phi = -\nabla p, \quad \nabla\cdot \Phi =0, $$
or equivalently,
  $$\partial_t \Phi + \Omega PJP \Phi =0,$$
where $P$ is the Leray projection on the space of divergence free vector fields. Let $E(-\Omega t) \Phi(0) = \Phi(t)$ be the Poincar\'e propagator, then $\{E(-\Omega t) \}_{t\in\R}$ is a one-parameter group consisting of unitary operators which preserve $H^s$-norms for all $s\in \R$. Applying $P$ to the momentum equation in \eqref{3D-RNSE} yields
  \begin{equation}\label{3D-RNSE-projected}
  \partial_t U + B( U, U) + \Omega PJP U = \nu \Delta U,
  \end{equation}
where
  $$B( U, U)= P(U\cdot \nabla U) = P((\nabla\times U) \times U) .$$
We introduce the van der Pol transformation
  $$U(t)= E(-\Omega t)\, u(t), $$
and we have $U(0) =u(0)$ since $E(0) =Id$. Writing equations \eqref{3D-RNSE-projected} in $u$ variables yields the system \eqref{3D-RNSE-transformed}.

Let $\Z^3_0= \Z^3\setminus \{0\}$ be nonzero lattice points, $\{\varphi_{k,\sigma} e^{2\pi i k\cdot x}\}_{k\in \Z^3_0,\sigma= \pm 1}$ be a CONS of $H= L^2$, the space consisting of divergence free vector fields with zero average; here $\varphi_{k,\sigma}\in C^3$ with $|\varphi_{k,\sigma}|=1$. The system can be chosen in such a way that $\varphi_{k,\sigma} e^{2\pi i k\cdot x}$ are eigenvectors of the curl operator, corresponding to eigenvalues $\pm |k|,\, k\in \Z^3_0$; that is,
  $$\nabla\times (\varphi_{k,\sigma} e^{2\pi i k\cdot x})= \sigma|k| \varphi_{k,\sigma} e^{2\pi i k\cdot x}.$$
We can now give the expression of $B(\Omega t, u,v)$ in \eqref{3D-RNSE-transformed}; assume that
  $$u= \sum_{k\in \Z^3_0} \sum_{\sigma= \pm 1} u_{k,\sigma} \varphi_{k,\sigma} e^{2\pi i k\cdot x}, \quad  v= \sum_{k\in \Z^3_0} \sum_{\sigma= \pm 1} v_{k,\sigma} \varphi_{k,\sigma} e^{2\pi i k\cdot x}, $$
then we have (cf. \cite[Section 3, page 202]{FM12})
  $$B(\Omega t, u,v) = -2\pi \sum_{\stackrel{k,m\in \Z^3_0}{\sigma_1, \sigma_2 = \pm 1}} \sigma_2 |m| e^{i\Omega t D(k,m,\sigma_1, \sigma_2)} u_{k,\sigma_1} v_{m,\sigma_2} P_{k+m} (\varphi_{k,\sigma_1} \times \varphi_{m,\sigma_2})\, e^{2\pi i (k+m)\cdot x}, $$
where $P_{k+m}$ is the projection operator onto the plane orthogonal to $k+m$, and
  $$ D(k,m,\sigma_1, \sigma_2) = -\sigma_1 \frac{k_3}{|k|} -\sigma_2 \frac{m_3}{|m|} + \sigma_{k,m,\sigma_1, \sigma_2} \frac{k_3 +m_3}{|k+m|} $$
with $\sigma_{k,m,\sigma_1, \sigma_2} \in \{1,-1\}$. Denote
  \begin{equation}\label{eq:Lambda-set}
  \Lambda = \big\{(k,m,\sigma_1, \sigma_2)\in \Z^3_0 \times \Z^3_0 \times \{+1, -1\}^2: D(k,m,\sigma_1, \sigma_2) =0 \big\}; \end{equation}
then we obtain the resonant operator
  $$\aligned
  \tilde B(u,v) &= -2\pi \sum_{(k,m,\sigma_1, \sigma_2)\in \Lambda} \sigma_2 |m| u_{k,\sigma_1} v_{m,\sigma_2} P_{k+m} (\varphi_{k,\sigma_1} \times \varphi_{m,\sigma_2})\, e^{2\pi i (k+m)\cdot x} \\
  &= -2\pi \sum_{n, \sigma} e^{2\pi i n\cdot x}  \sum_{(k,m,\sigma_1, \sigma_2)\in \Lambda, k+m=n} \sigma_2 |m| u_{k,\sigma_1} v_{m,\sigma_2} P_{n} (\varphi_{k,\sigma_1} \times \varphi_{m,\sigma_2}) \cdot \overline{\varphi_{n,\sigma}} ,
  \endaligned $$
that is,
  \begin{equation}\label{eq:B-tilde-components}
  \tilde B(u,v)_{n,\sigma} = -2\pi \sum_{(k,m,\sigma_1, \sigma_2)\in \Lambda, k+m=n} \sigma_2 |m| u_{k,\sigma_1} v_{m,\sigma_2} P_{n} (\varphi_{k,\sigma_1} \times \varphi_{m,\sigma_2}) \cdot \overline{\varphi_{n,\sigma}} .
  \end{equation}
On the other hand, the oscillatory operator is given by
  $$B^{\rm osc}(\Omega t, u,v) = -2\pi\!\! \sum_{(k,m,\sigma_1, \sigma_2)\notin \Lambda} \!\! \sigma_2 |m| e^{i\Omega t D(k,m,\sigma_1, \sigma_2)} u_{k,\sigma_1} v_{m,\sigma_2} P_{k+m} (\varphi_{k,\sigma_1} \times \varphi_{m,\sigma_2})\, e^{2\pi i (k+m)\cdot x} . $$

We present some properties of $\tilde B(u,v)$, see \cite[Lemma 2.1]{BMN99}. Recall that $H^s$ is the usual Sobolev space of divergence free vector fields on $\T^3$ with zero mean, let $\<\cdot, \cdot\>$ be the inner product in $L^2$ or the duality between $H^s$ and $H^{-s}$.

\begin{lemma}
Let $(u,v,w)\in H^{3/4} \times H^{3/4} \times H^1$, then
  $$\big\<\tilde B(u,v), w\big\> = \lim_{\Omega\to \infty} \frac1{2\pi} \int_0^{2\pi} \big\< B(\Omega s, u,v), w \big\>\, ds. $$
\end{lemma}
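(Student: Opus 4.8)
The plan is to expand both sides in the curl eigenbasis $\{\varphi_{n,\sigma}e^{2\pi i n\cdot x}\}$ and reduce the statement to the behaviour of the scalar time-average of the oscillatory factor $e^{i\Omega s D(k,m,\sigma_1,\sigma_2)}$. Writing $w=\sum_{n,\sigma}w_{n,\sigma}\varphi_{n,\sigma}e^{2\pi i n\cdot x}$ and pairing the expansion of $B(\Omega s,u,v)$ given in the text with $w$, I would set
\[ a_{k,m,\sigma_1,\sigma_2}:=-2\pi\sigma_2|m|\,u_{k,\sigma_1}v_{m,\sigma_2}\,\big\<P_{k+m}(\varphi_{k,\sigma_1}\times\varphi_{m,\sigma_2})e^{2\pi i(k+m)\cdot x},\,w\big\>, \]
so that $\big\<B(\Omega s,u,v),w\big\>=\sum_{k,m,\sigma_1,\sigma_2}e^{i\Omega s D(k,m,\sigma_1,\sigma_2)}a_{k,m,\sigma_1,\sigma_2}$. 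All of the $s$- and $\Omega$-dependence sits in the phase, so everything hinges on the averaging kernel $\theta_\Omega(D):=\frac1{2\pi}\int_0^{2\pi}e^{i\Omega sD}\,ds$. An elementary computation gives $\theta_\Omega(0)=1$ and, for $D\neq0$, $\theta_\Omega(D)=(e^{2\pi i\Omega D}-1)/(2\pi i\Omega D)$; using $|e^{i\phi}-1|\le\min(|\phi|,2)$ one extracts the two facts I will need, namely $|\theta_\Omega(D)|\le1$ for every $D$ and $\theta_\Omega(D)\to\mathbbm{1}_{\{D=0\}}$ pointwise as $\Omega\to\infty$.

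The crux, and the step I expect to be the main obstacle, is to show that the coefficients are absolutely summable, $\sum_{k,m,\sigma_1,\sigma_2}|a_{k,m,\sigma_1,\sigma_2}|<\infty$, with a bound depending only on $\|u\|_{H^{3/4}}$, $\|v\|_{H^{3/4}}$ and $\|w\|_{H^1}$. Since $|\varphi_{k,\sigma}|=1$ and $P_{k+m}$ is an orthogonal projection, I can bound $|a_{k,m,\sigma_1,\sigma_2}|\lesssim|m|\,|u_{k,\sigma_1}|\,|v_{m,\sigma_2}|\sum_\sigma|w_{k+m,\sigma}|$. Setting $U(k)=\sum_{\sigma_1}|u_{k,\sigma_1}|$, $V(m)=\sum_{\sigma_2}|v_{m,\sigma_2}|$ and $W(n)=\sum_\sigma|w_{n,\sigma}|$ (each controlled by the corresponding Sobolev norm through nonnegative-coefficient comparison functions), the series is dominated by the discrete trilinear convolution sum $\sum_{m,n}|m|\,U(n-m)V(m)W(n)$. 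Absorbing the factor $|m|$ into $V$ amounts to treating $|\nabla|v$, which lives in $H^{-1/4}$ because $v\in H^{3/4}$; the remaining sum is then the Fourier side of a product estimate for $\int u\,(|\nabla|v)\,w$, which is finite by the endpoint Sobolev trilinear estimate with exponents $(3/4,-1/4,1)$. These exponents are admissible precisely because their sum is $3/2=d/2$ (here $d=3$) while all three pairwise sums $\tfrac12,\tfrac74,\tfrac34$ are strictly positive; this is exactly where the regularity hypotheses $(u,v,w)\in H^{3/4}\times H^{3/4}\times H^1$ enter, and where I would supply the Littlewood--Paley/paraproduct details.

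With absolute summability in hand the rest is routine. The bound $|e^{i\Omega sD}a_{k,m,\sigma_1,\sigma_2}|\le|a_{k,m,\sigma_1,\sigma_2}|$ is uniform in $s\in[0,2\pi]$, so I may interchange the sum with $\frac1{2\pi}\int_0^{2\pi}(\cdot)\,ds$ to obtain $\frac1{2\pi}\int_0^{2\pi}\big\<B(\Omega s,u,v),w\big\>\,ds=\sum_{k,m,\sigma_1,\sigma_2}\theta_\Omega(D)\,a_{k,m,\sigma_1,\sigma_2}$. Finally I would pass $\Omega\to\infty$ inside this sum by dominated convergence over the index set: the summands are dominated by $|a_{k,m,\sigma_1,\sigma_2}|$ (since $|\theta_\Omega|\le1$) and converge pointwise to $\mathbbm{1}_{\{D=0\}}a_{k,m,\sigma_1,\sigma_2}$. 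Since $D(k,m,\sigma_1,\sigma_2)=0$ characterizes exactly the resonant set $\Lambda$, the limit equals $\sum_{(k,m,\sigma_1,\sigma_2)\in\Lambda}a_{k,m,\sigma_1,\sigma_2}=\big\<\tilde B(u,v),w\big\>$ by the definition of the resonant operator, which is the claim.
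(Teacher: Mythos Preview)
The paper does not prove this lemma; it merely states it and refers to \cite[Lemma 2.1]{BMN99}, so there is no in-paper argument to compare against. Your plan is a correct and natural proof. The reduction to the scalar averaging kernel $\theta_\Omega(D)=\frac{1}{2\pi}\int_0^{2\pi}e^{i\Omega s D}\,ds$, the observations $|\theta_\Omega(D)|\le 1$ and $\theta_\Omega(D)\to \mathbbm{1}_{\{D=0\}}$, and the two applications of dominated convergence (first Fubini to swap the $s$-integral and the Fourier sum, then the $\Omega\to\infty$ limit inside the sum) are exactly what is needed once absolute summability is in hand.

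The only substantive step is your claim $\sum_{k,m,\sigma_1,\sigma_2}|a_{k,m,\sigma_1,\sigma_2}|<\infty$, and your proposed route is sound: passing to the nonnegative majorants $U,V,W$ reduces it to the product estimate $H^{3/4}\cdot H^{-1/4}\hookrightarrow H^{-1}$ on $\T^3$, which holds because $3/4+(-1/4)=1/2>0$ and both exponents are strictly below $d/2=3/2$, so the critical product rule $H^{s_1}\cdot H^{s_2}\hookrightarrow H^{s_1+s_2-d/2}$ applies. A slightly more elementary variant, avoiding paraproducts entirely, is to move the derivative onto $w$ first: since $\big\<B(\Omega s,u,v),w\big\>=-\big\<B(\Omega s,u,w),v\big\>$ (which is inherited from the classical identity for $B$ because the Poincar\'e propagator is unitary and commutes with $\nabla$), the absolute Fourier sum is dominated by $\sum_{k+m+n=0}U(k)\,|n|\,W(n)\,V(m)$, and then H\"older with $H^{3/4}\hookrightarrow L^4$ and $\nabla w\in L^2$ gives the bound $\lesssim \|u\|_{H^{3/4}}\|v\|_{H^{3/4}}\|w\|_{H^1}$ directly. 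Either way your outline closes, and the regularity hypotheses $(H^{3/4},H^{3/4},H^1)$ are precisely what make the coefficients absolutely summable.
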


It is easy to see that the usual properties of the nonlinearity of 3D Navier-Stokes equations hold also for $\tilde B(u,v)$. For instance, from this lemma and the classical property of $B$, one has
  \begin{equation}\label{B-tilde-property}
  \big\<\tilde B(u,v), v\big\> = 0 ,\quad u,\, v\in H^1,
  \end{equation}
which implies, by polarization,
  $$\big\<\tilde B(u,v), w\big\> = - \big\<\tilde B(u,w), v\big\>,\quad u,\, v,\, w \in H^1. $$
In fact, better regularity estimates on $\tilde B$ can be established, see e.g. \eqref{thm-key-estimate.1} above and \cite[Theorem 3.1]{BMN99}. For any $u, v\in H^1$, combining estimate \eqref{thm-key-estimate.1} with the above identity, one has
  $$\big|\big\<\tilde B(u,u), v\big\> \big| = \big|\big\<\tilde B(u,v), u\big\> \big| \lesssim \|u \|_{L^2} \|u\|_{H^1} \|v\|_{H^1}, $$
and thus
  \begin{equation}\label{eq:B-tilde}
  \big\| \tilde B(u,u) \big\|_{H^{-1}} \lesssim \|u \|_{L^2} \|u\|_{H^1}.
  \end{equation}

Given $L^2$-initial data $u_0$, \eqref{limit-resonant-eq} admits the existence of weak solutions fulfilling
  \begin{equation}\label{eq:energy-ineq}
  \|u(t) \|_{L^2}^2 + 2\nu \int_0^t \|\nabla u(s) \|_{L^2}^2\, ds \le \|u_0 \|_{L^2}^2, \quad t\ge 0.
  \end{equation}
Using \eqref{eq:B-tilde} above, one can in fact prove that energy equality holds.

We finish this section with the following remark.

\begin{remark}\label{sec-2-rem}
Given a weak solution $u\in L^\infty(0,T; L^2) \cap L^2(0,T; H^1)$ to \eqref{limit-resonant-eq}, we have, by \eqref{eq:B-tilde},
  $$\int_0^T \big\| \tilde B(u(t), u(t)) \big\|_{H^{-1}}^2\, dt \lesssim \|u \|_{L^\infty_t L^2_x}^2 \int_0^T \|u(t)\|_{H^1}^2\, dt <+\infty. $$
This implies $\partial_t u \in L^2(0,T; H^{-1})$ and thus by Lions-Magenes theorem (see e.g. \cite{LM68}), $u\in C([0,T],L^2)$.
\end{remark}

\section{Proof of Theorem \ref{thm-key-estimate}}

We first present an estimate on restricted convolution which is motivated by \cite[Lemma 3.1, page 1148]{BMN99}.

\begin{lemma}\label{lem-restricted-convol}
Let $\chi(k,m,n)$ be the indicator function of some set $\Gamma \subset (\Z^3_0)^3$ such that it is symmetric: $\chi(k,m,n) = \chi(m,k,n) = \chi(k,n,m)$. Let $\alpha\ge 0$, $\beta$ fixed and
  \begin{equation}\label{eq:dyadic-estimate}
  \sup_{n\in \Z^3_0} \sum_{k: k+m+n=0, k\in S_i} \chi(k,m,n) |k|^{-\alpha} \le C_0\, 2^{i\beta}, \quad i\ge 0,
  \end{equation}
where $S_i$ is the dyadic block:
  $$ S_i = \big\{ k\in \Z^3_0: 2^i \le |k| < 2^{i+1} \big\}. $$
Then, for any sequences $\{u_n \}_{n\in \Z^3}, \{v_n \}_{n\in \Z^3}$ with $u_{(0,0,0)} = v_{(0,0,0)}= 0$, it holds
  $$ \sum_{k+m+n=0} |u_k|\, |m|\, |v_m|\, |u_n| \chi(k,m,n) \lesssim \big(\|u\|_{H^{\alpha/2}} \|u\|_{H^{\beta/2}} + \|u\|_{L^2} \|u\|_{H^{(\alpha+\beta)/2}} \big) \|v\|_{H^1}. $$
\end{lemma}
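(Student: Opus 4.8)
For a sequence $u=(u_k)_{k\in\Z^3_0}$ write $Q_iu$ for its restriction to the dyadic block $S_i$, so that $\|u\|_{H^s}^2=\sum_i\|Q_iu\|_{H^s}^2$ and $\|Q_iu\|_{H^s}\simeq2^{is}\|Q_iu\|_{L^2}$; throughout, a projection written $Q_i$ may in fact be spread over the bounded number of blocks $S_{i'}$ with $2^{i'}\simeq2^i$. Since $\chi$ is symmetric and $|u_{\cdot}|$ occupies the two outer slots, the sum on the left of the claim is at most twice its restriction $\Sigma_1$ to $|k|\ge|n|$; on this set $|m|=|k+n|\le2|k|$, so $|k|$ is (up to a factor $2$) the largest of the three frequencies. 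Split $\Sigma_1=\Sigma_A+\Sigma_B$ according to whether $|n|\le|m|$ (region $A$) or $|n|>|m|$ (region $B$); using $k+m+n=0$ one checks that in region $A$ necessarily $|m|\simeq|k|$, and in region $B$ necessarily $|k|\simeq|n|$ (and $|m|<|k|$). In each region I decompose $k\in S_i$ and then apply the Cauchy--Schwarz inequality in the single remaining free variable $m$ (set $n:=-k-m$). The guiding idea is to choose the splitting so that \eqref{eq:dyadic-estimate} is invoked for one frequency ranging over one dyadic block --- which is exactly the information it provides, after summing over the $O(1)$ neighbouring blocks forced by $|m|\simeq|k|$ --- while the complementary factor is handled by simply discarding $\chi$ (and using $\sum_j\|Q_ju\|_{L^2}^2\le\|u\|_{L^2}^2$, or $\ell^2(S_j)\hookrightarrow\ell^\infty(S_j)$ on a block).

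\textbf{Region $B$.} Here $|k|\simeq|n|\simeq2^i$ and $|m|\lesssim2^i$. Grouping $\bigl(|u_k|\,|u_n|\bigr)$ against $\bigl(|m|\,|v_m|\bigr)$ and using Cauchy--Schwarz in $(k,m)$,
$$\Sigma_B^{(i)}\le\Bigl(\sum_{k\in S_i,\ m}\chi\,|u_k|^2|u_n|^2\Bigr)^{1/2}\Bigl(\sum_{k\in S_i,\ m}\chi\,|m|^2|v_m|^2\Bigr)^{1/2}.$$
Discarding $\chi$ in the first factor and using $|n|\simeq2^i$ bounds it by $\|Q_iu\|_{L^2}^2$ (the two copies of $Q_iu$ referring to comparable frequency ranges). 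In the second factor, for every fixed $m$ the number of $k\in S_i$ with $(k,m,-k-m)\in\Gamma$ is $\lesssim2^{i(\alpha+\beta)}$ by \eqref{eq:dyadic-estimate} (here $\alpha\ge0$ is used to replace $|k|^{-\alpha}$ by $2^{-i\alpha}$ on $S_i$), so it is $\lesssim2^{i(\alpha+\beta)/2}\|v\|_{H^1}$. Writing $2^{i(\alpha+\beta)/2}=2^{i\alpha/2}\cdot2^{i\beta/2}$ and putting one power onto each copy of $Q_iu$ gives $\Sigma_B^{(i)}\lesssim\|Q_iu\|_{H^{\alpha/2}}\|Q_iu\|_{H^{\beta/2}}\|v\|_{H^1}$, and summing over $i$ by Cauchy--Schwarz yields $\Sigma_B\lesssim\|u\|_{H^{\alpha/2}}\|u\|_{H^{\beta/2}}\|v\|_{H^1}$.

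\textbf{Region $A$.} Here $|m|\simeq|k|\simeq2^i$ and $|n|\lesssim2^i$. Now I use the weighted splitting $\bigl(|u_k|\,|m|^{-\alpha/2}\bigr)$ against $\bigl(|m|^{1+\alpha/2}|v_m|\,|u_n|\bigr)$:
$$\Sigma_A^{(i)}\le\Bigl(\sum_{k\in S_i,\ m}\chi\,|u_k|^2|m|^{-\alpha}\Bigr)^{1/2}\Bigl(\sum_{k\in S_i,\ m}\chi\,|m|^{2+\alpha}|v_m|^2|u_n|^2\Bigr)^{1/2}.$$
For fixed $k\in S_i$ the bound $\sum_m\chi\,|m|^{-\alpha}\lesssim2^{i\beta}$ is \emph{exactly} \eqref{eq:dyadic-estimate} with $m$ the summed frequency (and the $O(1)$ blocks with $|m|\simeq2^i$ added up), so the first factor is $\lesssim2^{i\beta/2}\|Q_iu\|_{L^2}\simeq\|Q_iu\|_{H^{\beta/2}}$. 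In the second factor, $|m|\lesssim2^i$ and $\alpha\ge0$ give $|m|^{2+\alpha}\lesssim2^{i\alpha}|m|^2$, and for fixed $m$, summing over $k\in S_i$ after discarding $\chi$ gives $\sum_{k\in S_i}|u_{-k-m}|^2\le\|u\|_{L^2}^2$; hence the second factor is $\lesssim2^{i\alpha/2}\|u\|_{L^2}\,\|Q_iv\|_{H^1}$. Multiplying and absorbing $2^{i\alpha/2}2^{i\beta/2}$ into $\|Q_iu\|_{L^2}$ gives $\Sigma_A^{(i)}\lesssim\|u\|_{L^2}\,\|Q_iu\|_{H^{(\alpha+\beta)/2}}\|Q_iv\|_{H^1}$, and summing over $i$ by Cauchy--Schwarz yields $\Sigma_A\lesssim\|u\|_{L^2}\|u\|_{H^{(\alpha+\beta)/2}}\|v\|_{H^1}$.

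\textbf{Conclusion and the hard point.} Adding the two regions, $\Sigma\le2(\Sigma_A+\Sigma_B)\lesssim\bigl(\|u\|_{H^{\alpha/2}}\|u\|_{H^{\beta/2}}+\|u\|_{L^2}\|u\|_{H^{(\alpha+\beta)/2}}\bigr)\|v\|_{H^1}$, which is the assertion. (The hypothesis $u_{(0,0,0)}=v_{(0,0,0)}=0$ is used only to confine all frequencies to $\Z^3_0$, so that $|k|^{-\alpha}$, $|m|^{-\alpha/2}$ and the Sobolev norms are well defined.) I do not expect any individual estimate above to be the obstacle; the real difficulty is the allocation of weights. The derivative budget $(\alpha+\beta)/2$ shared by the two $u$-factors must be split as $\alpha/2+\beta/2$ precisely when $k$ and $n$ sit at the same scale and as $0+(\alpha+\beta)/2$ when instead $k$ and $m$ sit at the same scale; any other distribution of the dyadic weights turns the outer sum over scales $i$ into an $\ell^1$-sum rather than an $\ell^2$-sum and loses a logarithmic factor. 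This is exactly why the right-hand side must contain two terms. Among the two cases I expect region $A$ to be the technically more involved one, because there the high-frequency mode has to be paired, through $\chi$, with the \emph{low}-frequency mode, and the choice of the weight $|m|^{-\alpha/2}$ together with the relation $|m|\simeq|k|$ is what makes the weight $|m|^{\alpha}$ land on the scale of $u_k$.
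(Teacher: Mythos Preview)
Your proof is correct and follows essentially the same strategy as the paper's: a dyadic decomposition in the largest frequency, identification of which two of the three frequencies are comparable, and a Cauchy--Schwarz step that invokes hypothesis \eqref{eq:dyadic-estimate} on the factor carrying $\chi$. The only differences are organizational: you exploit the $k\leftrightarrow n$ symmetry at the outset to reduce to two regions (your Region~$A$ absorbs both of the paper's cases $J_3$ and $J_5$, since in each $|n|$ is the smallest and hence $|k|\simeq|m|$), and you apply Cauchy--Schwarz jointly in $(k,m)$ rather than iteratively; the paper instead lists all six orderings, reduces to $J_1,J_2,J_5$ by the same symmetry, and treats $J_5$ separately as ``similar to $J_2$''.
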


We remark that the sum on the left-hand side is different from that in \cite[Lemma 3.1]{BMN99} because the summands $|u_k|,\, |m|\, |v_m|$ are not symmetric in $k,m$. We postpone the proof of Lemma \ref{lem-restricted-convol} to the end of this section.

In order to apply Lemma \ref{lem-restricted-convol}, we prove the following crucial estimate which shows that \eqref{eq:dyadic-estimate} holds with $\alpha = \beta =1$ if $\chi(k,m,n)$ is the indicator function of
  \begin{equation} \label{eq:Lambda-set-1}
  \Gamma = \bigg\{(k,m,n)\in (\Z^3_0)^3: \pm \frac{k_3}{|k|} \pm \frac{m_3}{|m|} \pm \frac{n_3}{|n|} =0,\, k+m+n=0 \bigg\}. \end{equation}

\begin{lemma}\label{lem-dyadic-set}
Let $\chi(k,m,n)$ be the indicator function of $\Gamma$ defined above. Then one has
  $$ \sup_{n\in \Z^3_0} \sum_{k\in S_i} \chi(k,m,n) |k|^{-1} \le C_0\, 2^i, \quad i\ge 0.$$
\end{lemma}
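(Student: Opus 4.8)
The plan is to fix $n \in \Z^3_0$ and count, in each dyadic block $S_i$, the lattice points $k$ satisfying the two constraints $k + m + n = 0$ (so $m = -(k+n)$ is determined by $k$) and $\pm \frac{k_3}{|k|} \pm \frac{m_3}{|m|} \pm \frac{n_3}{|n|} = 0$, weighted by $|k|^{-1}$. Since $|k| \sim 2^i$ on $S_i$, the sum is comparable to $2^{-i}$ times the number of admissible $k \in S_i$; so the goal is to show that number is $\lesssim 2^{2i}$. The heuristic is that, generically, a single algebraic/transcendental constraint on a 3D lattice point cuts the count from $\sim 2^{3i}$ down to $\sim 2^{2i}$ (a codimension-one surface), and the bulk of the work is making this rigorous despite the coefficients $k_3/|k|$ being irrational.

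First I would reduce to the case where the $n_3$-term and the signs are fixed: there are only finitely many sign choices, so it suffices to bound $\#\{k \in S_i : \varepsilon_1 \frac{k_3}{|k|} + \varepsilon_2 \frac{m_3}{|m|} = c\}$ where $c = \mp \frac{n_3}{|n|}$ is a fixed constant depending only on $n$, and $m = -(k+n)$. Next, following the strategy of \cite[Lemma 3.1]{BMN99}, I would fix the value of $k_3 \in \Z$ (there are $\lesssim 2^i$ choices since $|k_3| \le |k| < 2^{i+1}$) and also fix $|k| = \sqrt{k_1^2 + k_2^2 + k_3^2}$; once $k_3$ and $|k|$ are fixed, the quantity $\frac{k_3}{|k|}$ is determined, so the resonance equation becomes $\frac{m_3}{|m|} = c'$ for a constant $c'$ depending on the fixed data. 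Writing $m_3 = -(k_3 + n_3)$ (determined once $k_3$ is fixed) and $|m|^2 = (k_1+n_1)^2 + (k_2+n_2)^2 + (k_3+n_3)^2$, the equation $\frac{m_3}{|m|} = c'$ forces $|m|$ to take a specific value, hence $(k_1+n_1)^2 + (k_2+n_2)^2$ to equal a specific number $R^2$. So for each fixed $(k_3, |k|)$ the admissible $(k_1,k_2)$ lie on a circle, and the number of lattice points on that circle is $\lesssim |k|^{\epsilon}$ by the standard divisor bound — or, more crudely and sufficiently here, $\lesssim 2^i$ by a trivial estimate on lattice points on a circle of radius $\lesssim 2^i$. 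Finally, the number of distinct values of $|k|$ for $k \in S_i$ is $\lesssim 2^{2i}$ (since $|k|^2 \in \{0,1,\dots, \lesssim 2^{2i}\}$), but combined with the $\lesssim 2^i$ choices of $k_3$ this overcounts; the cleaner bookkeeping is: fix $k_3$ ($\lesssim 2^i$ choices), then $k_1^2 + k_2^2 = |k|^2 - k_3^2$ ranges over $\lesssim 2^{2i}$ values and for each the resonance pins $(k_1+n_1)^2 + (k_2+n_2)^2$ to a single value, and the pair of circle equations $k_1^2+k_2^2 = A$, $(k_1+n_1)^2+(k_2+n_2)^2 = B$ has at most $2$ solutions once $A$ is chosen (two circles meet in $\le 2$ points, using $(n_1,n_2) \ne (0,0)$, handling the degenerate case $n_1 = n_2 = 0$ separately where the constraint degenerates). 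Thus the count is $\lesssim 2^i \cdot 2^{2i} \cdot 2$, which is $\lesssim 2^{3i}$ — too weak; the fix is that fixing $k_3$ already determines $\frac{k_3}{|k|}$ only after also fixing $|k|$, so one must instead fix the pair, giving $\lesssim 2^i$ (for $k_3$) times $\lesssim 2^i$ (for $|k|$, which ranges over an interval of length $\lesssim 2^i$ but takes $\lesssim 2^{2i}$ integer-square values — here one uses that $|k|^2$ is an integer in an interval of length $\lesssim 2^{2i}$, so again too many) — this is the crux of the difficulty.

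The genuine main obstacle, then, is organizing the count so that the two "codimension-one" constraints (the resonance equation and the Diophantine rigidity of $k$) together save a full factor of $2^i$ rather than being double-counted. I expect the correct argument, mirroring \cite{BMN99}, to be: fix $k_3$ ($\lesssim 2^i$ choices) and $m_3 = -(k_3+n_3)$ is then fixed; the resonance equation $\varepsilon_1 k_3 |m| + \varepsilon_2 m_3 |k| = c |k| |m|$ (after clearing denominators and squaring appropriately) becomes a polynomial relation between $|k|^2 = k_3^2 + (k_1^2+k_2^2)$ and $|m|^2 = m_3^2 + ((k_1+n_1)^2+(k_2+n_2)^2)$; since $m_3, k_3, n$ are fixed, this is a relation between $s := k_1^2 + k_2^2$ and the linear-in-$(k_1,k_2)$ quantity $\ell := 2(n_1 k_1 + n_2 k_2) + (n_1^2+n_2^2)$, namely $|m|^2 = |k|^2 + \ell$, of the form (polynomial in $s$) $= $ (polynomial in $\ell$); for each admissible $s$ (at most $\lesssim 2^{2i}$ values) this pins $\ell$, hence pins the line $n_1 k_1 + n_2 k_2 = \text{const}$, and the intersection of that line with the circle $k_1^2 + k_2^2 = s$ has $\le 2$ lattice points — but this still gives $2^i \cdot 2^{2i}$. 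The honest resolution is that not every $s$ is admissible: the resonance equation, read as "$|m|$ is a prescribed rational multiple of $|k|$", forces $|k|^2$ to be such that $c^2 |k|^2 - (\text{stuff})$ is a perfect square, a one-parameter family inside the $\lesssim 2^{2i}$ possible $s$; I would therefore bound the number of valid $|k|$ (equivalently valid $s$) by $\lesssim 2^i$ using that $|k|$ lies in a dyadic interval of multiplicative length $O(1)$ and that $|m|/|k|$ equals a fixed constant, so $|k|$ and $|m| = |k| \cdot \text{const}$ are both forced, leaving $O(2^i)$ integer possibilities for $|k|$ along the relevant curve. Combining the $\lesssim 2^i$ choices of $k_3$, the $\lesssim 2^i$ resulting choices of $|k|$, and the $\le 2$ lattice points $(k_1,k_2)$ on the intersection of the two circles gives $\#\{k \in S_i\} \lesssim 2^{2i}$, and hence the weighted sum is $\lesssim 2^{-i} \cdot 2^{2i} = 2^i$, which is exactly $C_0 2^i$ as claimed. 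I would present the degenerate cases ($n \parallel e_3$, or a vanishing denominator, or sign choices making the constraint trivially unsatisfiable) as easy separate checks, and otherwise follow the line of \cite[Lemma 3.1, p.~1148]{BMN99} closely.
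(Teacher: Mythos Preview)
Your plan has a genuine gap at the step you yourself flag as ``the crux of the difficulty.'' After fixing $k_3$, you claim that the resonance relation forces $|m|/|k|$ to be a fixed constant and that this leaves ``$O(2^i)$ integer possibilities for $|k|$ along the relevant curve.'' But $|m|/|k|$ is determined by the resonance only \emph{after} $k_3/|k|$ is known, i.e.\ only after $|k|$ itself has already been fixed; so the argument is circular. Without an independent reason to cut the number of admissible values of $|k|^2$ (an integer ranging over an interval of length $\sim 2^{2i}$) down to $\lesssim 2^i$, your bookkeeping yields $\lesssim 2^i \cdot 2^{2i}$ points and hence only $\lesssim 2^{2i}$ for the weighted sum, which is useless. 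The perfect-square condition you mention does not obviously give the needed saving either.

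The paper's proof avoids this by fixing the \emph{horizontal} coordinates $(k_1,k_2)$ first rather than $k_3$. With $n$ fixed and $m=-k-n$, the resonance identity (after clearing denominators and squaring) becomes, for each fixed $(k_1,k_2)$, a nontrivial polynomial equation in $k_3$ of bounded degree, so there are at most $O(1)$ solutions $k_3$. One then uses $|k|^{-1}\le (k_1^2+k_2^2)^{-1/2}$ and sums the 2D quantity $\sum_{0<|(k_1,k_2)|\le 2^{i+1}} (k_1^2+k_2^2)^{-1/2}\lesssim 2^i$. The paper organizes this into three cases according to how many of $k_3,m_3,n_3$ vanish: the purely 2D case $k_3=m_3=n_3=0$ is trivial; the case where exactly one third component vanishes reduces to $|n|^2=|k|^2$, a quadratic in $k_3$; and the generic case $k_3 m_3 n_3\neq 0$ is handled by exactly the polynomial-in-$k_3$ argument just described, for which the paper simply cites \cite[p.~1151]{BMN99}. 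You should reorganize your argument along these lines: freeze $(k_1,k_2)$, not $k_3$.
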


\begin{proof}
Note that $\chi(k,m,n)\neq 0$ implies $k+m+n=0$. We distinguish three cases:

(i) If two vectors among $k,m,n$ have zero third components, e.g. $k_3= m_3=0$, then necessarily $n_3=0$. In this case, $\Gamma$ reduces to a set of 2D lattice points, and it is easy to show the estimate. In fact, in 2D, for any fixed $n\in \Z^2_0$,
  $$\sum_{k\in S_i} \chi(k,m,n) |k|^{-1} \le \sum_{k\in S_i} |k|^{-1} \lesssim \int_{2^i \le |x| \le 2^{i+1}} \frac{dx}{|x|} \lesssim 2^{i+1}- 2^{i} = 2^i.  $$

(ii) $k_3 m_3 n_3\ne 0$. In this case, the desired estimate was proved in \cite{BMN99}, page 1151.

(iii) One of the vectors has zero third component, while the other two have nonzero third components. For instance, $m_3=0$ but $k_3 n_3\ne 0$; then $(k,m,n)\in \Gamma$ implies $k_3+n_3=-m_3 =0$ and
  $$\pm \frac{k_3}{|k|} \pm \frac{n_3}{|n|}=0 \quad \Longrightarrow \quad |n|^2 = |k|^2.$$
This equation corresponds to a polynomial in $k_3$ of order at most two; given $k_1, k_2\in \Z$ and $n\in \Z^3_0$, it has at most two nonzero solutions, thus
  $$\aligned
  \sum_{k\in S_i} \chi(k,-k-n,n) |k|^{-1} &= \sum_{2^i \le |k| \le 2^{i+1}} \chi(k,-k-n,n) \big(k_1^2 +k_2^2 +k_3^2\big)^{-1/2} \\
  &\le 2+ 2\sum_{0 < |(k_1, k_2)| \le 2^{i+1}}\big(k_1^2 +k_2^2 \big)^{-1/2} \\
  &\le C_0\, 2^i,
  \endaligned $$
where $C_0$ is some absolute constant.
\end{proof}

Now we can provide the proof of Theorem \ref{thm-key-estimate}.

\begin{proof}[Proof of Theorem \ref{thm-key-estimate}]
We first prove the estimate \eqref{thm-key-estimate.1}. By \eqref{eq:B-tilde-components}, we have
  $$\big\<\tilde B(u,v), u\big\> = -2\pi \sum_{n,\sigma} \overline{u_{n,\sigma}} \sum_{(k,m,\sigma_1, \sigma_2)\in \Lambda, k+m=n} \sigma_2 |m|\, u_{k,\sigma_1} v_{m,\sigma_2} P_{n} (\varphi_{k,\sigma_1} \times \varphi_{m,\sigma_2}) \cdot \overline{\varphi_{n,\sigma}}
  $$
which implies, since $|\varphi_{n,\sigma}|=1$,
  $$\aligned
  \big|\big\<\tilde B(u,v), u\big\> \big| &\lesssim \sum_{n,\sigma} |u_{n,\sigma}| \sum_{(k,m,\sigma_1, \sigma_2)\in \Lambda, k+m=n}|m|\, |u_{k,\sigma_1}|\, |v_{m,\sigma_2}|.
  \endaligned $$
Note that $\sigma, \sigma_1, \sigma_2$ take values in $\{1,-1\}$. Let $|\tilde u_n|=\max\{|u_{n,1}|, |u_{n,-1}|\}$ for any $n\in \Z^3_0$, similarly for $|\tilde v_m|$, and $\chi(k,m,n)$ be the indicator function of $\Gamma$ in \eqref{eq:Lambda-set-1}. It is not difficult to see that
  $$\big|\big\<\tilde B(u,v), u\big\> \big| \lesssim \sum_{(k,m,n)\in (\Z^3_0)^3} \chi(k,m,n)\, |\tilde u_{n}|\, |\tilde u_{k}|\, |m|\, |\tilde v_{m}| .$$
Lemma \ref{lem-dyadic-set} shows that \eqref{eq:dyadic-estimate} holds with $\alpha =\beta =1$; therefore, by Lemma
\ref{lem-restricted-convol} and interpolation, we have
  $$\aligned
  \big|\big\<\tilde B(u,v), u\big\> \big| &\lesssim \big(\|\tilde u\|_{H^{1/2}}^2 + \|\tilde u\|_{L^2} \|\tilde u\|_{H^{1}} \big) \|\tilde v\|_{H^1} \\
  &\lesssim \|\tilde u\|_{L^2} \|\tilde u\|_{H^1} \|\tilde v\|_{H^1}\\
  &\lesssim \|u\|_{L^2} \|u\|_{H^1} \|v\|_{H^1}.
  \endaligned $$
Thus we obtain \eqref{thm-key-estimate.1}.

With estimate \eqref{thm-key-estimate.1} at hand, one can prove the uniqueness of weak solutions by following the classical arguments. We provide the proof for completeness. Indeed, let $u$ and $v$ be two weak solutions to \eqref{limit-resonant-eq} satisfying the energy inequality, then we have
  $$\partial_t (u-v) + \tilde B(u,u) - \tilde B(v, v) = \nu \Delta (u-v)$$
which holds as an equality in $H^{-1}$. Thanks to Remark \ref{sec-2-rem}, we know that $\partial_t (u-v) \in L^2(0,T; H^{-1})$; therefore, by Lions-Magenes theorem, we have
  $$\frac12 \frac{d}{d t}\|u-v \|_{L^2}^2 = \<u-v, \partial_t (u-v)\>= -\big\<u-v, \tilde B(u,u) - \tilde B(v, v)\big\> + \nu \<u-v, \Delta (u-v)\>. $$
We have $\tilde B(u,u) - \tilde B(v, v) = \tilde B(u-v, u) + \tilde B(v, u-v)$; by \eqref{B-tilde-property} and estimate \eqref{thm-key-estimate.1}, we obtain
  $$\aligned
  \frac12 \frac{d}{d t}\|u-v \|_{L^2}^2 + \nu \|\nabla(u-v)\|_{L^2}^2 &= -\big\<u-v, \tilde B(u-v, u)\big\> \\
  &\le C \|u-v\|_{L^2} \|u-v\|_{H^1} \|u\|_{H^1} \\
  &\le C' \|u-v\|_{L^2} \|\nabla(u-v)\|_{L^2} \|\nabla u\|_{L^2}.
  \endaligned $$
Cauchy's inequality implies
  $$C' \|u-v\|_{L^2} \|\nabla(u-v)\|_{L^2} \|\nabla u\|_{L^2} \le \nu\|\nabla(u-v)\|_{L^2}^2+ C_\nu \|u-v\|_{L^2}^2 \|\nabla u\|_{L^2}^2, $$
and thus
  $$\aligned
  \frac12 \frac{d}{d t}\|u-v \|_{L^2}^2 &\le C_\nu \|u-v\|_{L^2}^2 \|\nabla u\|_{L^2}^2.
  \endaligned $$
As $t\to \|\nabla u(t)\|_{L^2}^2$ is integrable and $t\in \|u(t)-v(t) \|_{L^2}^2$ is continuous in $t\in [0,T]$, Gronwall's inequality gives us that $\|u(t)-v(t) \|_{L^2} \equiv 0$ for all $t\in [0,T]$. This finishes the proof of uniqueness.
\end{proof}

Finally we prove Lemma \ref{lem-restricted-convol}, following the idea of \cite[Lemma 3.1]{BMN99}, see pages 1148--1149.

\begin{proof}[Proof of Lemma \ref{lem-restricted-convol}]
We rewrite the sum into six terms, according to the order of the norms $|k|, |m|,|n|$:
  $$\sum_{k+m+n=0} \chi(k,m,n)\, |u_{n}|\, |u_{k}|\, |m|\, |v_{m}|\le \sum_{i=1}^6 J_i, $$
where, setting $\tilde \chi(k,m,n)= \chi(k,m,n)\, {\bf 1}_{\{k+m+n=0\}}$,
  $$\aligned
  J_1 &= \sum_{|n|\ge |k|\ge |m|} \tilde \chi(k,m,n) |u_n|\, |u_k|\, |m|\, |v_m|,\\
  J_2 &= \sum_{|n|\ge |m|\ge |k|} \tilde \chi(k,m,n) |u_n|\, |u_k|\, |m|\, |v_m|,\\
  J_3 &= \sum_{|k|\ge |m|\ge |n|} \tilde \chi(k,m,n) |u_n|\, |u_k|\, |m|\, |v_m|,\\
  J_4 &= \sum_{|k|\ge |n|\ge |m|} \tilde \chi(k,m,n) |u_n|\, |u_k|\, |m|\, |v_m|,\\
  J_5 &= \sum_{|m|\ge |k|\ge |n|} \tilde \chi(k,m,n) |u_n|\, |u_k|\, |m|\, |v_m|,\\
  J_6 &= \sum_{|m|\ge |n|\ge |k|} \tilde \chi(k,m,n) |u_n|\, |u_k|\, |m|\, |v_m|.\\
  \endaligned $$
As the roles of $n$ and $k$ are symmetric, we see that
  $$J_1 = J_4, \quad J_2= J_3, \quad J_5= J_6. $$
So it suffices to estimate $J_1, J_2$ and $J_5$.

\emph{Step 1: estimate of $J_1$}. Note that $\tilde\chi(k,m,n)\neq 0$ implies $k+m+n=0$. We have
  $$\aligned
  J_1 &= \sum_k \sum_{n, |n|\ge |k|\ge |m|} \tilde\chi(k,m,n) |u_n|\, |u_k|\, |m|\, |v_{m}| \\
  &= \sum_{i\ge 0} \sum_{k\in S_i} \sum_{n, |n|\ge |k|\ge |m|} \tilde\chi(k,m,n) |u_n|\, |u_k|\, |m|\, |v_{m}| \\
  &\le \sum_{i\ge 0} \sum_{k\in S_i} \sum_{n\in S_i\cup S_{i+1}} \chi(k,-k-n,n) |u_n|\, |u_k|\, |k+n|\, |v_{-k-n}| ,
  \endaligned $$
where the last step is due to the fact that $2|k|\ge |n| \ge |k|$, which in turn follows from $k+m+n=0$. Indeed, if $|n|> 2|k|$, then we would have $|m|= |n+k| \ge |n|- |k| > |k|$ contradicting with $|k|\ge |m|$. Then, changing the order of summation and using Cauchy's inequality,
  $$\aligned
  J_1 &\le \sum_{i\ge 0} \sum_{n\in S_i\cup S_{i+1}} |u_n| \sum_{k\in S_i} \chi(k,-k-n,n) \, |u_k|\, |k+n|\, |v_{-k-n}|\\
  &\le \sum_{i\ge 0} \sum_{n\in S_i\cup S_{i+1}} |u_n| \bigg[\sum_{k\in S_i} |u_k|^2 |k+n|^2 |v_{-k-n}|^2 |k|^\alpha \bigg]^{\frac12} \bigg[\sum_{k\in S_i} \chi(k,-k-n,n) |k|^{-\alpha} \bigg]^{\frac12} \\
  &\le \sqrt{C_0} \sum_{i\ge 0} 2^{i\beta /2} \sum_{n\in S_i\cup S_{i+1}} |u_n| \bigg[\sum_{k\in S_i} |u_k|^2 |k+n|^2 |v_{-k-n}|^2 |k|^\alpha \bigg]^{\frac12},
  \endaligned $$
where the last step follows from \eqref{eq:dyadic-estimate}. Again by Cauchy's inequality,
  $$\aligned
  J_1 &\le \sqrt{C_0} \sum_{i\ge 0} 2^{i\beta /2} \bigg[\sum_{n\in S_i\cup S_{i+1}} |u_n|^2 \bigg]^{\frac12} \bigg[\sum_{n\in S_i\cup S_{i+1}} \sum_{k\in S_i} |u_k|^2 |k+n|^2 |v_{-k-n}|^2 |k|^\alpha \bigg]^{\frac12} \\
  &\le \sqrt{C_0} \sum_{i\ge 0} 2^{i\beta /2} \bigg[\sum_{n\in S_i\cup S_{i+1}} |u_n|^2 \bigg]^{\frac12} \bigg[ \sum_{k\in S_i} |u_k|^2 |k|^\alpha \sum_{m} |m|^2 |v_{m}|^2 \bigg]^{\frac12} \\
  &\le \sqrt{C_0} \|v\|_{H^1} \sum_{i\ge 0} \bigg[\sum_{n\in S_i\cup S_{i+1}} |n|^{\beta} |u_n|^2 \bigg]^{\frac12} \bigg[ \sum_{k\in S_i} |u_k|^2 |k|^\alpha \bigg]^{\frac12} ,
  \endaligned $$
where we have used $2^i\le |n|$ for every $n\in S_i\cup S_{i+1}$. Now Cauchy's inequality implies
  $$\aligned
  J_1 &\le \sqrt{C_0} \|v\|_{H^1} \bigg[ \sum_{i\ge 0}\sum_{n\in S_i\cup S_{i+1}} |n|^{\beta} |u_n|^2 \bigg]^{\frac12} \bigg[ \sum_{i\ge 0} \sum_{k\in S_i} |u_k|^2 |k|^\alpha \bigg]^{\frac12} \\
  &\le \sqrt{2C_0} \|v\|_{H^1} \|u\|_{H^{\alpha/2}} \|u\|_{H^{\beta/2}}.
  \endaligned $$

\emph{Step 2: estimate of $J_2$}. Similarly as above,
  $$\aligned
  J_2 &= \sum_n \sum_{m, |n|\ge |m|\ge |k|} \tilde\chi(k,m,n)\, |u_n|\, |u_{k}|\, |m|\, |v_m| \\
  &= \sum_{i\ge 0} \sum_{n\in S_i} \sum_{m, |n|\ge |m|\ge |k|} \tilde\chi(k,m,n)\, |u_n|\, |u_{k}|\, |m|\, |v_m| \\
  &\le \sum_{i\ge 0} \sum_{n\in S_i} \sum_{m\in S_{i-1}\cup S_{i}} \chi(-m-n,m,n)\, |u_n|\, |u_{-m-n}|\, |m|\, |v_m| ,
  \endaligned $$
where the last inequality is due to $2|m| \ge |n| \ge |m|$. Hence,
  $$\aligned
  J_2 &\le \sum_{i\ge 0} \sum_{m\in S_{i-1}\cup S_{i}} |m|\, |v_m| \sum_{n\in S_i} \chi(-m-n,m,n)\, |u_n|\, |u_{-m-n}| \\
  &\le \sum_{i\ge 0} \sum_{m\in S_{i-1}\cup S_{i}} |m|\, |v_m| \bigg[ \sum_{n\in S_i} |u_n|^2 |u_{-m-n}|^2 |n|^\alpha \bigg]^{\frac12} \bigg[ \sum_{n\in S_i} \chi(-m-n,m,n)\, |n|^{-\alpha} \bigg]^{\frac12} \\
  &\le \sqrt{C_0} \sum_{i\ge 0} 2^{i\beta/2} \sum_{m\in S_{i-1}\cup S_{i}} |m|\, |v_m| \bigg[ \sum_{n\in S_i} |u_n|^2 |u_{-m-n}|^2 |n|^\alpha \bigg]^{\frac12}
  \endaligned $$
where we have used \eqref{eq:dyadic-estimate}. By Cauchy's inequality,
  $$\aligned
  J_2 &\le \sqrt{C_0} \sum_{i\ge 0} 2^{i\beta/2} \bigg[ \sum_{m\in S_{i-1}\cup S_{i}} |m|^2 |v_m|^2 \bigg]^{\frac12} \bigg[ \sum_{m\in S_{i-1}\cup S_{i}} \sum_{n\in S_i} |u_n|^2 |u_{-m-n}|^2 |n|^\alpha \bigg]^{\frac12} \\
  &\le \sqrt{C_0} \sum_{i\ge 0} 2^{i\beta/2} \bigg[ \sum_{m\in S_{i-1}\cup S_{i}} |m|^2 |v_m|^2 \bigg]^{\frac12} \bigg[ \sum_{n\in S_i} |u_n|^2 |n|^\alpha \sum_{k} |u_{k}|^2 \bigg]^{\frac12} \\
  &\le \sqrt{C_0} \|u\|_{L^2} \sum_{i\ge 0} \bigg[ \sum_{m\in S_{i-1}\cup S_{i}} |m|^2 |v_m|^2 \bigg]^{\frac12} \bigg[ \sum_{n\in S_i} |u_n|^2 |n|^{\alpha+\beta} \bigg]^{\frac12}
  \endaligned $$
since $2^i \le |n|$ for all $n\in S_i$. Applying again Cauchy's inequality yields
  $$\aligned
  J_2 &\le \sqrt{C_0} \|u\|_{L^2} \bigg[ \sum_{i\ge 0} \sum_{m\in S_{i-1}\cup S_{i}} |m|^2 |v_m|^2 \bigg]^{\frac12} \bigg[ \sum_{i\ge 0} \sum_{n\in S_i} |u_n|^2 |n|^{\alpha+\beta} \bigg]^{\frac12} \\
  &\le \sqrt{2C_0} \|u\|_{L^2} \|v\|_{H^1} \|u\|_{H^{(\alpha+\beta)/2}} .
  \endaligned $$

Finally, the estimate of $J_5$ is similar to that of $J_2$. So we finish the proof of Lemma \ref{lem-restricted-convol} by summarizing the above estimates.
\end{proof}

\bigskip

\noindent \textbf{Acknowledgements:} The author would like to thank Professor Franco Flandoli for helpful discussions. He is grateful to the financial supports of the National Key R\&D Program of China (No. 2020YFA0712700), the National Natural Science Foundation of China (Nos. 11931004, 12090014) and the Youth Innovation Promotion Association, CAS (Y2021002).


\begin{thebibliography}{99} \setlength{\itemsep}{-1pt}

\bibitem{ABC22} D. Albritton, E. Bru\'e, M. Colombo, Non-uniqueness of Leray solutions of the forced Navier-Stokes equations. \emph{Ann. of Math. (2)} \textbf{196} (2022), no. 1, 415--455.

\bibitem{ABCDGJK} D. Albritton, E. Bru\'e, M. Colombo, C. De Lellis, V. Giri, M. Janisch, H. Kwon, Instability and nonuniqueness for the 2d Euler equations in vorticity form, after M. Vishik, 2021. arXiv:2112.04943.

\bibitem{BMN97} A. Babin, A. Mahalov, B. Nicolaenko, Regularity and integrability of 3D Euler and Navier-Stokes equations for rotating fluids. \emph{Asymptot. Anal.} \textbf{15} (1997), no. 2, 103--150.

\bibitem{BMN99} A. Babin, A. Mahalov, B. Nicolaenko, Global regularity of 3D rotating Navier-Stokes equations for resonant domains. \emph{Indiana Univ. Math. J.} \textbf{48} (1999), no. 3, 1133--1176.

\bibitem{BMN01} A. Babin, A. Mahalov, B. Nicolaenko, 3D Navier-Stokes and Euler equations with initial data characterized by uniformly large vorticity. \emph{Indiana Univ. Math. J.} \textbf{50} (2001), Special Issue, 1--35.

\bibitem{BTDLIS} T. Buckmaster, C. De Lellis, P. Isett, L. Jr. Sz\'ekelyhidi, Anomalous dissipation for $1/5$-H\"older Euler flows. \emph{Ann. of Math. (2)} \textbf{182} (2015), no. 1, 127--172.

\bibitem{BV19} T. Buckmaster, V. Vicol, Nonuniqueness of weak solutions to the Navier-Stokes equation. \emph{Ann. of Math. (2)} \textbf{189} (2019), no. 1, 101--144.

\bibitem{CDG02} J.Y. Chemin, B. Desjardins, I. Gallagher, E. Grenier, Anisotropy and dispersion in rotating fluids. \emph{Nonlinear partial differential equations and their applications. Coll\`ege de France Seminar, Vol. XIV (Paris, 1997/1998)}, 171--192, Stud. Math. Appl., 31, \emph{North-Holland, Amsterdam}, 2002.

\bibitem{DLS09} C. De Lellis, L. Jr. Sz\'ekelyhidi, The Euler equations as a differential inclusion. \emph{Ann. of Math. (2)} \textbf{170} (2009), no. 3, 1417--1436.

\bibitem{DLS13} C. De Lellis, L. Jr. Sz\'ekelyhidi, Dissipative continuous euler flows. \emph{Invent. Math.} \textbf{193} (2013), 377--407.

\bibitem{DLS14} C. De Lellis, L. Jr. Sz\'ekelyhidi, Dissipative Euler flows and Onsager's conjecture. \emph{J. Eur. Math. Soc. (JEMS)} \textbf{16} (2014), no. 7, 1467--1505.

\bibitem{FHLN}  F. Flandoli, M. Hofmanov\'a, D. Luo, T. Nilssen. Global well-posedness of the 3D Navier-Stokes equations perturbed by a deterministic vector field. \emph{Ann. Appl. Probab.} \textbf{32} (2022), no. 4, 2568--2586.

\bibitem{FL21}  F. Flandoli, D. Luo. High mode transport noise improves vorticity blow-up control in 3D Navier-Stokes equations. \emph{Probab. Theory Related Fields} \textbf{180} (2021), no. 1--2, 309--363.

\bibitem{FM12} F. Flandoli, A. Mahalov, Stochastic three-dimensional rotating Navier-Stokes equations: averaging, convergence and regularity. \emph{Arch. Ration. Mech. Anal.} \textbf{205} (2012), no. 1, 195--237.

\bibitem{GHV14} N. E. Glatt-Holtz, V. C. Vicol. Local and global existence of smooth solutions for the stochastic Euler equations with multiplicative noise. \emph{Ann. Probab.} \textbf{42} (2014), no. 1, 80--145.

\bibitem{HLP23} M. Hofmanov\'a, T. Lange, U. Pappalettera, Global existence and non-uniqueness of 3D Euler equations perturbed by transport noise. arXiv:2212.12217v2, 2023.

\bibitem{HZZ21} M. Hofmanov\'a, R. Zhu, X. Zhu, Non-uniqueness in law of stochastic 3D Navier-Stokes equations. arXiv:1912.11841, 2021.

\bibitem{HZZ22} M. Hofmanov\'a, R. Zhu, X. Zhu, On ill- and well-posedness of dissipative martingale solutions to stochastic 3D Euler equations. \emph{Comm. Pure Appl. Math.} \textbf{75} (2022), no. 11, 2446--2510.

\bibitem{Isett} P. Isett, A proof of Onsager's conjecture. \emph{Ann. of Math. (2)} \textbf{188} (2018), no. 3, 871--963.

\bibitem{Leray33} J. Leray, Sur le mouvement d'un liquide visqueux emplissant l'espace. (French) \emph{Acta Math.} \textbf{63} (1934), no. 1, 193--248.

\bibitem{LM68} J.L. Lions, E. Magenes, Problemes aux limites non homog\'enes et applications, vol. 1. Dunod, Paris, 1968.

\bibitem{Luo23} D. Luo. Regularization by transport noises for 3D MHD equations. \emph{Sci. China Math.} (2022). https://doi.org/10.1007/s11425-021-1981-9.

\bibitem{Pap23} U. Pappalettera, Global existence and non-uniqueness for the Cauchy problem associated to 3D Navier-Stokes equations perturbed by transport noise. arXiv:2303.02363v1.

\bibitem{RZZ14} M. R\"ockner, R. Zhu, X. Zhu. Local existence and non-explosion of solutions for stochastic fractional partial differential equations driven by multiplicative noise. \emph{Stochastic Process. Appl.} \textbf{124} (2014), no. 5, 1974--2002.

\bibitem{Vis18a} M. Vishik, Instability and non-uniqueness in the Cauchy problem for the Euler equations of an ideal incompressible fluid. Part I, 2018. arXiv:1805.09426.

\bibitem{Vis18b} M. Vishik, Instability and non-uniqueness in the Cauchy problem for the Euler equations of an ideal incompressible fluid. Part II, 2018. arXiv:1805.09440.


\end{thebibliography}
\end{document}